\documentclass[11pt,oneside]{amsart}
\usepackage{geometry}                
\geometry{letterpaper}                   
\usepackage{graphicx}

\usepackage{color}  

\usepackage{amssymb}
\usepackage{epstopdf}
\theoremstyle{plain}
\newtheorem{thm}{Theorem}[section]
\newtheorem{lem}[thm]{Lemma}
\newtheorem{prop}[thm]{Proposition}
\newtheorem{cor}[thm]{Corollary}
\newtheorem{ques}[thm]{Question}
\newtheorem{example}[thm]{Example}

\theoremstyle{definition}

\newtheorem{rem}[thm]{Remark}

\DeclareMathOperator{\N}{\mathbb N} 
\DeclareMathOperator{\Ap}{Ap}
\DeclareGraphicsRule{.tif}{png}{.png}{`convert #1 `dirname #1`/`basename #1 .tif`.png}

\def\opn#1#2{\def#1{\operatorname{#2}}} 
\opn\Ap{Ap}

\title{Dilatations of numerical semigroups}

\author{V. Barucci}
\address{V. Barucci - Dipartimento di Matematica - Sapienza - Universit\`a di Roma - Piazzale A. Moro 2 - 00185 Rome - Italy}
\email{barucci@mat.uniroma1.it}
\author{F. Strazzanti}
\address{F. Strazzanti - Departamento de \'Algebra - Facultad de Matem\'aticas - Universidad de Sevilla - Avda. Reina Mercedes s/n - 41080 Sevilla - Spain}
\email{francesco.strazzanti@gmail.com}

\thanks{The second author was partially supported by MTM2016-75027-P, MTM2013-46231-P (Ministerio de Economı\'ia y Competitividad) and FEDER}

\keywords{Numerical semigroups, Almost Gorenstein, 2-AGL, Nearly Gorenstein, Wilf's conjecture}
\subjclass[2010]{20M14; 20M25; 13H10}


\begin{document}

\begin{abstract}
This paper is focused on numerical semigroups and presents a simple construction, that we call dilatation, which, from a starting semigroup $S$, permits to get an infinite family of semigroups which share several properties with $S$. The invariants of each semigroup $T$ of this family are given in terms of the corresponding invariants of $S$ and the Ap\'ery set and the minimal generators of $T$ are also described. 
We also study three properties that are close to the Gorenstein property of the associated semigroup ring: almost Gorenstein, 2-AGL, and nearly Gorenstein properties. 
More precisely, we prove that $S$ satisfies one of these properties if and only if each dilatation of $S$ satisfies the corresponding one.
\end{abstract}

\maketitle

 \section{Introduction}
 
A numerical semigroup $S$ is a submonoid of $\mathbb{N}$ for which $\mathbb{N} \setminus S$ is finite. Numerical semigroups arise in several context, for instance, given a field $k$, it is possible to define a ring $k[[S]]=k[[t^s ; s \in S]]$ called the numerical semigroup ring associated with S; that's why numerical semigroup theory has many analogies with commutative algebra.

The aim of the paper is to develop a new construction that we call {\em dilatation} of $S$. More precisely, given $a \in S$, we study the numerical semigroup $S+a=\{s+a ; s \in S \setminus \{0\}\} \cup \{0\}$. Actually it is enough to require that $a \in \mathbb{N} \setminus S$, provided that $S+a$ is a semigroup.
Rosales considers a similar construction in \cite{R}, taking $\{0\} \cup \{s+a; s \in S\}$, but despite the apparent similarity the two constructions are deeply different.
 
We give simple formulas for several invariants of $S+a$ (and therefore $k[[S+a]]$ as well) such as embedding dimension, type and Hilbert function, but also for the Ap\'ery set of $S+a$. We also explain how to get the minimal generators of $S+a$, giving an explicit formula if $S$ has only two generators.
Moreover, if the Wilf's conjecture holds for the starting semigroup $S$, it holds also for all the semigroups $S+a$.
 
Numerical semigroups reflect several properties of one-dimensional Cohen Macaulay rings and it is well known that Gorenstein rings correspond to symmetric semigroups. Since $S+a$ is never symmetric, we focus Section 3 on three properties which in terms of rings are close to the Gorenstein property. More precisely, we consider almost symmetric, 2-AGL, and nearly Gorenstein properties.

Almost symmetric semigroups and the corresponding almost Gorenstein rings were introduced in \cite {BF} for one-dimensional analytically unramified rings. These arise in numerical semigroup theory in order to generalize symmetric and pseudo-symmetric semigroups. 
Later the notion has been generalized to a larger class of rings, even of higher dimension (cf. \cite{GMP} and \cite{GTT}).

Nearly Gorenstein and 2-almost Gorenstein rings, briefly 2-AGL, are two very recent notions introduced in \cite{HHS} and \cite{CGKM} respectively. The 2-AGL rings are, in some sense, the rings not almost Gorenstein that are the closest to be that. Instead nearly Gorenstein rings generalize Gorenstein rings using the trace ideal and are not related to almost Gorenstein rings in general, even if  each one-dimensional almost Gorenstein ring is nearly Gorenstein.

In particular, we consider these three properties for numerical semigroups and show that a semigroup $S$ satisfies one of these properties if and only if each dilatation of $S$ satisfies the corresponding property.

Several computations of the paper are performed by using the GAP system \cite{GAP} and, in particular, the NumericalSgps package \cite{DGM}.

\section{General properties}

Let $S$ be a numerical semigroup with maximal ideal $M=S \setminus \{0\}$. We always assume that $S \neq \mathbb{N}$. For all the basic definitions of the invariants of $S$ we refer to \cite{RG}.
The elements of $\mathbb{N} \setminus S$ are called gaps of $S$ and, since there is a finite number of gaps, we denote by $g(S)$ their cardinality, which is called the genus of $S$. The maximum gap is the Frobenius number $F(S)$ of $S$.
Other important invariants are the multiplicity $e(S)$, i.e. the smallest positive element of $S$, the cardinality of $S \cap [0, F(S)]$ denoted by $n(S)$ and the type $t(S)$.  We also denote by $\Gamma (S)$ the set of minimal generators and by $\nu(S)=|\Gamma (S)|$ the embedding dimension of $S$. Moreover, let $H_S(h)$ denote the $h$-th value of the Hilbert function of $S$, i.e. $H_S(h)=|hM \setminus (h+1)M|$; recall that $H_S(1)=\nu(S)$. 

If $a \in S$, it is easy to check that $\{0\} \cup \{s+a; s \in M\}$ is also a numerical semigroup (with maximal ideal $M+a$) that we call a {\it dilatation} of $S$ with respect to $a$ and denote by $S+a$. 
The definition above can be generalized a little. In order to have a semigroup $T$ with a translation $a$ of the maximal ideal $M$ of $S$, it is necessary and sufficient that $a + s_1 + s_2 \in M$, for each $s_1,s_2 \in M$, i.e. that $a \in M - 2M$.
The semigroups $S$ and $S+a$ are strictly related:

\begin{prop}\label{basic} Let $a \in M-2M$ and $T=S+a$. Then:
\begin {enumerate}
\item $F(T)=F(S)+a$;
\item $g(T)=g(S)+a$;
\item $e(T)=e(S)+a$;
\item $n(T)=n(S)$;
\item $t(T)=t(S)+a$;
\item $H_T(h)=H_S(h)+a$ for each $h \geq 1$; 
\item $\nu(T)= \nu(S)+a$.

\end{enumerate}
\end{prop}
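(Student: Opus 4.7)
My plan is to read each invariant directly off the explicit description $T = \{0\} \cup (M+a)$. The observation underlying everything is that for $k \geq 1$ we have $k \in T$ iff $k - a \in M$, which gives a disjoint decomposition of the gaps of $T$:
\[
\mathbb{N} \setminus T \;=\; \{1, 2, \ldots, a\} \;\sqcup\; \{\, a + g : g \in \mathbb{N}\setminus S \,\}.
\]
From this, (1) is the maximum and (2) the cardinality of the right-hand side. Item (3) is immediate since $e(T) = \min(M+a) = e(S)+a$, and (4) follows because $T\cap [0, F(T)] = \{0\} \cup (a + (M \cap [0, F(S)]))$ has $1 + (n(S)-1) = n(S)$ elements. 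I will return to (7) at the end, as the $h=1$ case of (6).

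For the type formula (5) the cleanest approach is to observe that the set
\[
V \;:=\; \{\, x \in \mathbb{Z} : x + M \subseteq M \,\}
\]
is the same for $S$ and for $T$, because $x + (M+a) \subseteq M+a$ is equivalent to $x + M \subseteq M$ after cancelling $a$. Noting that $V \cap \mathbb{N}$ decomposes both as the disjoint union $S \sqcup PF(S)$ (automatic on $S$; on gaps this is the very definition of pseudo-Frobenius) and as $T \sqcup PF(T)$, I would count elements in an initial interval $[0,N]$ with $N \geq F(T)$ to obtain
\[
(N+1-g(S)) + t(S) \;=\; |V \cap [0,N]| \;=\; (N+1-g(T)) + t(T),
\]
which forces $t(T)-t(S) = g(T)-g(S) = a$ using (2).

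For the Hilbert function (6), I start from the identity $hM_T = hM + ha$, immediate from $M_T = M+a$. Since $hM$ is cofinite in $\mathbb{N}$ for each $h \geq 1$, I set $\gamma_h := |\mathbb{N}\setminus hM| < \infty$; translating by $ha$ just introduces the integers below $ha$ as new gaps, so $|\mathbb{N}\setminus hM_T| = ha + \gamma_h$. The hypothesis $a \in M-2M$, i.e.\ $a + M + M \subseteq M$, yields by absorption the inclusion $(h+1)M + a \subseteq hM$ and therefore $(h+1)M_T \subseteq hM_T$. Taking the difference of the cofinite complements then gives
\[
H_T(h) \;=\; |\mathbb{N}\setminus(h+1)M_T| - |\mathbb{N}\setminus hM_T| \;=\; \bigl((h+1)a + \gamma_{h+1}\bigr) - (ha + \gamma_h) \;=\; a + H_S(h),
\]
and setting $h=1$ recovers (7). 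The step I expect to require the most care is the absorption $(h+1)M + a \subseteq hM$, which comes from grouping one ``$a$'' with two elements of $M$ via the defining property; everything else is bookkeeping once the two key structural facts $T = \{0\}\cup(M+a)$ and the invariance $V = M - M$ have been isolated.
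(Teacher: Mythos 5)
Your proof is correct. For items (1)--(5) you are doing essentially what the paper does: (1)--(4) are the routine verifications the paper dismisses as trivial, and for the type you isolate exactly the paper's key fact, namely that $M-M=(M+a)-(M+a)$, only making the cardinality bookkeeping explicit by counting the disjoint union of the semigroup with its pseudo-Frobenius numbers inside an initial interval (the paper instead invokes (2) directly, which amounts to the same thing since the complement of $M-M$ in $\mathbb{N}$ consists of common gaps of $S$ and $T$). The one place you genuinely diverge is the Hilbert function. The paper measures $hM\setminus(h+1)M$ against the \emph{interior} reference set $hM+e$, writing $H_S(h)=e-\delta_h$ with $\delta_h=|(h+1)(M-e)\setminus h(M-e)|$ and observing that $M-e=(M+a)-(e+a)$ makes the correction terms dilation-invariant; the inclusion $hM+e\subseteq(h+1)M$ it needs is free because $e\in M$. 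You instead measure against the \emph{exterior} reference set $\mathbb{N}$, writing $H_S(h)=\gamma_{h+1}-\gamma_h$ with $\gamma_h=|\mathbb{N}\setminus hM|$ and tracking that the shift $hM_T=hM+ha$ creates exactly $ha$ new gaps; the inclusion you need, $(h+1)M_T\subseteq hM_T$, is where you invoke $a\in M-2M$, although it also comes for free from $M_T$ being the maximal ideal of the semigroup $T$, via $(h+1)M_T\subseteq hM_T+T\subseteq hM_T$. Your version yields the uniform one-line formula $H_T(h)=a+H_S(h)$ without mentioning the reduction number; the paper's version has the minor side benefit of exhibiting that both Hilbert functions stabilize at the common reduction number of $M$ and $M+a$.
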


\begin{proof} The first four points are trivial and (7) follows from (6). For (5) observe that
the maximal ideal of $T$ is $M+a$ and $M-M=(M+a)-(M+a)$. Since $S \neq \mathbb N$,  the type of $S$ is given by $|(M-M) \setminus S|$ and  the type of $T$  is given by $|(M-M) \setminus T|$, thus applying (2) we get 
$t(T)=t(S)+a$. \\
(6) By definition  $H_S(h)=|hM \setminus (h+1)M|$ and,
since $hM+e \subseteq (h+1)M$, we have 
$$H_S(h)=|hM \setminus (hM+e)| - |(h+1)M \setminus (hM+e)|.$$
Observe that $|hM \setminus (hM+e)|=e$ and that 
$$ |(h+1)M \setminus (hM+e)|= |((h+1)M-(h+1)e) \setminus ((hM+e)-(h+1)e)|=|(h+1)(M-e) \setminus h(M-e)|$$
Denoting by $\delta_h$ this last cardinality, we have $H_S=[1,  e-\delta_1, e-\delta_2, \dots ]$.

Now, for each $h \geq 1$, 
$$h(M-e)=h((M+a)-(e+a)) $$ and so
$$\delta_h=|(h+1)(M-e) \setminus h(M-e)| = |(h+1)((M+a)-(e+a)) \setminus h((M+a)-(e+a))|.$$
The smallest $h$ such that $\delta_h=0$ is the reduction number (say $r$) of $M$ and of $M+a$.
Since $H_S=[1, e-\delta_1, e-\delta_2, \dots, e- \delta_{r-1},e,e, \dots]$ and $H_T=[1, e+a-\delta_1, e+a-\delta_2, \dots, e+a- \delta_{r-1},e+a,e+a, \dots]$, we get the thesis.
\end{proof}

It is easy to see that $S$ is of maximal embedding dimension if and only if $T$ is of maximal embedding dimension and that $S$ is an Arf semigroup if and only if $T$ is Arf. 

The Ap\'ery set of $S$ with respect to $s\in S$ is the set $\Ap(S,s)=\{x \in S ; x-s \notin S\}$. This set contains many information about the semigroup and we recall that its cardinality is always $s$.

\begin{prop} \label{Apery} Let $a \in S$. If $0< s \in S$, the Ap\'ery set of $T=S+a$ with respect to $s+a \in T$ is
\[
\Ap(T,s+a)= \{0,s+2a\} \cup \{\alpha + a \mid \alpha \in \Ap(S,s) \setminus \{0\}\} \cup \{\beta +s +a \mid \beta \in \Ap(S,a) \setminus \{0\} \}.
\]
\end{prop}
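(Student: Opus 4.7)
The plan is to establish the displayed equality by proving containment of the right-hand side in $\Ap(T, s+a)$ together with pairwise distinctness of its listed elements, and then invoking a cardinality count. Assume $a>0$ (otherwise $T=S$ and there is nothing to show). Since $s+a \in T$ is positive, $|\Ap(T,s+a)| = s+a$, and the right-hand side contains $2 + (s-1) + (a-1) = s+a$ candidates once distinctness is checked, so showing each listed element lies in $\Ap(T,s+a)$ forces equality. A useful fact I would use repeatedly is that $T\setminus\{0\} = M+a \subseteq M \subseteq S$, so "outside $S$" implies "outside $T$", and, dually, $x \in T\setminus\{0\}$ if and only if $x-a \in M$.

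For membership in $\Ap(T, s+a)$: the element $0$ is trivial. For $s+2a$, I would write $s+2a = (s+a) + a$ with $s+a \in M$ to place it in $T$, then note $s+2a - (s+a) = a \notin T$, because $a \in T\setminus\{0\}$ would force $0 = a-a \in M$, a contradiction. For $\alpha+a$ with $\alpha \in \Ap(S,s)\setminus\{0\}$, one has $\alpha \in M$ so $\alpha+a\in T$, while $\alpha+a-(s+a) = \alpha - s \notin S \supseteq T$. For $\beta+s+a$ with $\beta \in \Ap(S,a)\setminus\{0\}$, the closure $M+M \subseteq M$ gives $\beta+s+a \in T$; and $\beta+s+a-(s+a) = \beta$ is not in $T$, since $\beta \in T\setminus\{0\}$ would require $\beta - a \in M \subseteq S$, contradicting $\beta \in \Ap(S,a)$.

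For distinctness I would rule out each possible collision. The positive elements obviously differ from $0$. An identity $s+2a = \alpha+a$ would force $\alpha = s+a$, but since $a \in S$ we have $(s+a)-s = a \in S$, so $s+a \notin \Ap(S,s)$. An identity $s+2a = \beta+s+a$ forces $\beta = a$, giving $\beta-a = 0 \in S$, contradicting $\beta \in \Ap(S,a)$. An identity $\alpha+a = \beta+s+a$ yields $\alpha-s = \beta \in S$, contradicting $\alpha \in \Ap(S,s)$. Distinctness within each indexed family is immediate by cancellation.

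The main obstacle is purely bookkeeping; the single recurring observation doing real work is the equivalence $x \in T\setminus\{0\} \iff x-a \in M$, which is precisely what translates each Ap\'ery-set hypothesis phrased in $S$ into the required non-membership statement in $T$.
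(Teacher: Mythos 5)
Your proof is correct and follows essentially the same route as the paper's: count that the right-hand side has $s+a$ candidates, verify each lies in $\Ap(T,s+a)$, and check pairwise distinctness. You merely spell out a few steps the paper dismisses as ``clear'' (membership of $s+2a$ and the collisions involving $0$ and $s+2a$), which is fine.
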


\begin{proof}
In the set in the right side there are $s+a$ elements, therefore, it is enough to prove that they are in $\Ap(T,s+a)$ and they are distinct. Clearly $0$ and $s+2a$ are in $\Ap(T,s+a)$. Let $\alpha \in \Ap(S,s)\setminus \{0\}$. Then, $\alpha+a-(s+a)=\alpha-s \notin S$ and it is not in $T$, since $T \subseteq S$. Hence, $\alpha+a \in \Ap(T,s+a)$. Now, if $\beta \in \Ap(S,a)\setminus \{0\}$, then $\beta \not \in T$ and, thus, $\beta +s +a \in \Ap(T,s+a)$.

Finally, we prove that these elements are distinct. It is clear that $0$ and $s+2a$ are not in the other two sets. Then, suppose that $\alpha + a= \beta +s +a$ with $\alpha \in \Ap(S,s)$ and $\beta \in \Ap(S,a)$, i.e. $\alpha-s=\beta$; since $\beta \in S$ and $\alpha \in \Ap(S,s)$, this yields a contradiction.
\end{proof}

In the previous proposition it is necessary that $a \in S$. In fact, consider the numerical semigroup $S=\langle 4,7 \rangle$ and $T=S+10$; it is a straightforward check that $10 \in M-2M$ and $10 \notin S$. Moreover, $\Ap(S,4)=\{7,14,21\}$, but $31=21+10$ is not in $\Ap(T,14)$, since $31-14=7+10 \in T$.

\begin{lem} \label{embedding dimension}
Let $a \in M-2M$ and $T=S+a$. If $s,s_1,s_2 \in M$, then $s_1+s_2 \in \Ap(S,s)$ if and only if $(s_1+a)+(s_2+a) \in \Ap(T,s+a)$.
\end{lem}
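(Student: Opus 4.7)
The plan is to unfold the Ap\'ery set definition on both sides of the biconditional and reduce the equivalence to a shift equivalence that is essentially tautological. First I would note that $s_1+s_2 \in \Ap(S,s)$ iff $s_1+s_2 - s \notin S$ (the containment $s_1+s_2 \in S$ is automatic from $s_1, s_2 \in M$), while $(s_1+a)+(s_2+a) \in \Ap(T, s+a)$ breaks into two conditions: $(s_1+a)+(s_2+a) \in T$ and $s_1+s_2+a-s \notin T$.

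Next, I would dispose of the containment in $T$, which is free from the hypothesis $a \in M - 2M$: this gives $s_1+s_2+a \in M$, whence $(s_1+a)+(s_2+a) = (s_1+s_2+a)+a \in M+a \subseteq T$. Setting $y := s_1+s_2-s$, the biconditional is then reduced to proving
\[
y \notin S \iff y + a \notin T.
\]
Using $S = \{0\} \cup M$ and $T = \{0\} \cup (M+a)$, both sides are governed by the single condition ``$y \notin M$'', which corresponds under the tautological shift $y \mapsto y+a$ to ``$y+a \notin M+a$''. This handles both implications at once in the main case.

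The hard part will be aligning the ``zero-element'' portions of $S$ and $T$: the boundary possibilities $y = 0$ (then $y \in S$ while $y+a = a$ might still escape $T$) and $y = -a$ (then $y+a \in T$ while $y \notin S$) have to be analyzed separately, using $s, s_1, s_2 \in M$ together with $a \geq 1$ to ensure that neither degenerate alignment spoils the equivalence. Once this boundary bookkeeping is carried out, the biconditional follows directly from the shift equivalence, closing both directions in a single stroke.
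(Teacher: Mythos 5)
Your reduction is the right one and is, in substance, identical to the paper's: unfold the definitions, observe that $(s_1+a)+(s_2+a)\in T$ is automatic from $a\in M-2M$, and reduce everything to $y\notin S \iff y+a\notin T$ for $y=s_1+s_2-s$. The genuine problem is the step you defer. The two ``boundary'' alignments you flag, $y=0$ and $y=-a$, are not removable by more careful bookkeeping: both actually occur under the stated hypotheses, and in both the equivalence fails, so the statement as literally written is false and your plan cannot be completed. Concretely, take $S=\langle 2,3\rangle$, $a=2$ (so $T=\{0,4,5,6,\dots\}$) and $s_1=s_2=2$. With $s=6$ (the case $y=-a$; note $s=s_1+s_2+a\in M$ is always available precisely because $a\in M-2M$) one has $4\in\Ap(S,6)$ since $4-6=-2\notin S$, yet $(s_1+a)+(s_2+a)=8=s+a$ and $8-8=0\in T$, so $8\notin\Ap(T,8)$. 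With $s=4$ (the case $y=0$) one has $4\notin\Ap(S,4)$ since $0\in S$, yet $8-6=2\notin T$, so $8\in\Ap(T,6)$. Note that the hypothesis $a\geq 1$ works \emph{against} you in the second case: it is exactly what forces $y+a=a\notin T$ while $y=0\in S$.

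To be fair, the paper's own proof has the same blind spot at exactly the two places you isolated: in one direction it concludes membership in $\Ap(T,s+a)$ from $(s_1+s_2-s)+a\notin M+a$, forgetting that $T$ also contains $0$; in the other it passes from $s_1+s_2-s\notin M$ to $s_1+s_2\in\Ap(S,s)$, forgetting that $S$ also contains $0$. The lemma becomes correct, and both your argument and the paper's close without further ado, once the two degenerate values are excluded, e.g.\ under the extra hypothesis $s_1+s_2-s>0$. This holds automatically in the only situation the lemma is actually used, namely $s=e(S)$, where $s_1+s_2-s\geq e(S)>0$; so the downstream consequences (the count $\nu(T)=\nu(S)+a$ and Corollary \ref{Generators}) are unaffected. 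If you want a complete write-up, state the lemma with that restriction (or for $s=e(S)$ only); your shift argument then goes through verbatim.
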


\begin{proof}
Assume that $s_1 + s_2 \in \Ap(S,s)$, then $(s_1+a)+(s_2+a)=(s_1+s_2+a)+a \in M+a$ and $(s_1+a)+(s_2+a)-(s+a)=(s_1+ s_2-s)+a \notin M+a$; thus $(s_1+a)+(s_2+a) \in \Ap(T,s+a)$.
Conversely, if $(s_1+a)+(s_2+a) \in \Ap(T,s+a)$, then $(s_1+a)+(s_2+a)-(s+a)=s_1+s_2-s+a \notin M+a$ and, hence, $s_1+s_2-s \notin M$, i.e. $s_1+s_2 \in \Ap(S,s)$.
\end{proof}

The previous lemma gives a one to one correspondence between $\Ap(S, e(S)) \setminus \Gamma(S)$ and $\Ap(T, e(S)+a) \setminus \Gamma(T)$. So denoting by $\epsilon$ the cardinality of such sets, we get  an alternative way to see that $\nu(T)= \nu(S)+a$, in fact $\nu (T)= (e(S)+a) - \epsilon +1= (e(S)- \epsilon+1)+a= \nu(S)+a$. Moreover, in light of Proposition \ref{Apery}, the lemma implies the following corollary:

\begin{cor} \label{Generators} 
If $a \in S$, the minimal generators of $T=S+a$ are the following:
\begin{gather*}
\{e(S)+2a\} \, \cup \, \{\alpha+a \mid \alpha \in \Ap(S,e(S)) \setminus \{0\} {\rm \ and \ } \alpha-a \notin S \setminus (\Gamma(S) \cup \{0\}) \} \, \cup \\
\cup \, \{\beta+e(S)+a \mid \beta \in \Ap(S,a) {\rm \ and \ } \beta+e(S)-a \notin S \setminus (\Gamma(S) \cup \{0\})\}.
\end{gather*}
\end{cor}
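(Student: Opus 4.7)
The plan is to combine Proposition \ref{Apery} with the standard fact that, writing $M_T = T \setminus \{0\}$ for the maximal ideal of $T$, the minimal generators of $T$ are exactly
\[
\Gamma(T) = \{e(T)\} \cup \bigl\{ y \in \Ap(T, e(T)) \setminus \{0\} : y \notin M_T + M_T \bigr\}.
\]
Indeed, any $y \in M_T$ has a unique expression $y = k\,e(T) + r$ with $r \in \Ap(T, e(T))$, and one checks directly that the only $y$ outside $M_T + M_T$ are $y = e(T)$ or $y \in \Ap(T, e(T)) \setminus \{0\}$ with $y \notin M_T + M_T$. By Proposition \ref{basic}(3), $e(T) = e(S) + a$, so the relevant Apéry set is the one described in Proposition \ref{Apery}.

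The key observation is the translation identity $M_T + M_T = (M + a) + (M + a) = (M + M) + 2a$, which yields, for any $y \in \Ap(T, e(S) + a)$,
\[
y \in M_T + M_T \iff y - 2a \in M + M \iff y - 2a \in S \setminus (\Gamma(S) \cup \{0\}),
\]
using that the nonzero non-generators of $S$ are exactly the sums of two elements of $M$. Lemma \ref{embedding dimension} is the compatible refinement: any such decomposition automatically places $y - 2a$ inside $\Ap(S, e(S))$, but this stronger information is not needed below. Running the criterion through the three families of Proposition \ref{Apery}: for $y = e(S) + 2a$, one has $y - 2a = e(S) \in \Gamma(S)$, so $y$ is always a minimal generator; for $y = \alpha + a$ with $\alpha \in \Ap(S, e(S)) \setminus \{0\}$, the condition reads $\alpha - a \notin S \setminus (\Gamma(S) \cup \{0\})$; for $y = \beta + e(S) + a$ with $\beta \in \Ap(S, a) \setminus \{0\}$, it reads $\beta + e(S) - a \notin S \setminus (\Gamma(S) \cup \{0\})$.

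It remains to recover the generator $e(T) = e(S) + a$, which lies outside $\Ap(T, e(T)) \setminus \{0\}$, and to check that the three listed families do not overlap. The first is achieved by formally allowing $\beta = 0$ in the third family: then $\beta + e(S) + a = e(T)$, and the side condition $e(S) - a \notin S \setminus (\Gamma(S) \cup \{0\})$ holds vacuously because $a \in S \setminus \{0\}$ forces $a \geq e(S)$, so $e(S) - a \leq 0$. Disjointness of the three listed families, including this $\beta = 0$ contribution, follows from the distinctness argument already made in the proof of Proposition \ref{Apery}, together with the easy checks that $e(S) + 2a \neq e(S) + a$ and that $e(S) \notin \Ap(S, e(S))$. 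The main obstacle here is precisely this bookkeeping — in particular, confirming the $\beta = 0$ edge case contributes $e(T)$ exactly once and is consistent with the stated condition; once that is verified, the corollary falls out of Propositions \ref{basic} and \ref{Apery}.
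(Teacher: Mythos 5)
Your proof is correct and follows essentially the same route as the paper's: characterize the minimal generators of $T$ via $\Ap(T,e(S)+a)$, apply Proposition \ref{Apery}, and use the translation $M_T+M_T=(M+M)+2a$ (the content of Lemma \ref{embedding dimension}) to turn ``$x$ is not a generator'' into ``$x-2a\in S\setminus(\Gamma(S)\cup\{0\})$''. You are merely more explicit than the paper about the $\beta=0$ bookkeeping that recovers the generator $e(T)=e(S)+a$, which the paper handles with the single remark that all generators except $e(S)+a$ lie in $\Ap(T,e(S)+a)$.
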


\begin{proof}
Let $x \in \Ap(T,e(S)+a)$. By the previous lemma $x$ is not a generator of $S+a$ if and only if $x-2a=s_1+s_2$ for some $s_1, s_2 \in M$, that is equivalent to $x-2a \in S \setminus (\Gamma(S) \cup \{0\})$. Recalling that all the generators except $e(S)+a$ are in $\Ap(T,e(S)+a)$, the thesis follows by Proposition $\ref{Apery}$. 
\end{proof}

In the following we give an explicit description of the minimal generators of $S+a$, when $S$ has two generators.

\begin{prop}\label{2generators}
Let $S=\langle n, m \rangle$ with $\gcd(n,m)=1$ and $n<m$. Let $a= \lambda n + \mu m \in S$ with $\mu < n$ and consider $T=S+a$. Then: \\[1mm]
$\circ$ If $\lambda=0$, the minimal generators of $T$ are \[
\{n+2a\} \cup \{ym+a \mid 1 \leq y \leq \mu +1 \} \cup \{xn+ym+a \mid 1 \leq x \leq m-1 {\rm \ and \ } 0\leq y \leq \mu-1 \};
\]
$\circ$ If $\lambda>0$, the minimal generators of $T$ are 
\begin{gather*}
\{n+2a\} \cup \{ym+a \mid 1 \leq y \leq n -1\} \ \cup \\
\cup \ \{xn+ym+a \mid 1 \leq x \leq \lambda-1 {\rm \ and \ } 0\leq y \leq n+\mu-1\} \ \cup \\
\cup \ \{\lambda n+ym+a \mid 0\leq y \leq \mu+1\} \cup \{xn+ym+a \mid \lambda +1 \leq x \leq m {\rm \ and \ } 0\leq y \leq \mu-1\}.
\end{gather*}
\end{prop}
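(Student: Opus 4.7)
The plan is to apply Corollary~\ref{Generators}, which expresses the minimal generators of $T=S+a$ via the Apéry sets $\Ap(S,e(S))$ and $\Ap(S,a)$. Since $S=\langle n,m\rangle$ with $\gcd(n,m)=1$, every element of $S$ has a unique canonical form $xn+ym$ with $x\geq 0$ and $0\leq y\leq n-1$, which immediately gives $e(S)=n$, $\Gamma(S)=\{n,m\}$ and $\Ap(S,n)=\{0,m,2m,\dots,(n-1)m\}$. The substantive work is to compute $\Ap(S,a)$ and to check the two membership conditions of the corollary.

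First I would determine $\Ap(S,a)$. For canonical $s=xn+ym$, the element $s-a=(x-\lambda)n+(y-\mu)m$ has canonical $y$-coordinate $y-\mu$ when $y\geq\mu$, or $y-\mu+n$ when $y<\mu$ (using $n\cdot m=m\cdot n$ to restore canonical form); accordingly $s-a\in S$ iff $x\geq\lambda$ in the first case or $x\geq\lambda+m$ in the second. Hence $\Ap(S,a)$ is the union of the two rectangles $\{xn+ym:0\leq x\leq\lambda-1,\ \mu\leq y\leq n-1\}$ and $\{xn+ym:0\leq x\leq\lambda+m-1,\ 0\leq y\leq\mu-1\}$, of combined cardinality $a$.

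Next I would apply the conditions of Corollary~\ref{Generators}. For $\alpha=ym\in\Ap(S,n)\setminus\{0\}$, the canonical form of $\alpha-a$ has negative $x$-coordinate whenever $\lambda>0$, so every $y\in[1,n-1]$ survives; when $\lambda=0$ only $y\in[1,\mu+1]$ does, the extremal values $y=\mu,\mu+1$ being rescued because $\alpha-a\in\{0,m\}\subseteq\Gamma(S)\cup\{0\}$. For $\beta=xn+ym\in\Ap(S,a)$, an analogous computation of $\beta+n-a$ shows that $\beta$ fails to produce a generator only at the boundary $x=\lambda-1$ with $y\geq\mu+2$ in the first rectangle, and at $x=\lambda+m-1$ unless $\mu=n-1$ and $y=0$ in the second. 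Renaming $x'=x+1$ yields an explicit enumeration of the generators by canonical pairs.

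The main obstacle is the final bookkeeping step: the statement lists generators in possibly non-canonical form. For $\lambda>0$, the set indexed by $1\leq x\leq\lambda-1$, $0\leq y\leq n+\mu-1$ bundles together the canonical region $x'\in[1,\lambda-1]$, $y'\in[0,n-1]$ with the region $x'\in[m+1,m+\lambda-1]$, $y'\in[0,\mu-1]$, using the identity $x'n+y'm=(x'-m)n+(y'+n)m$ to rewrite the latter; likewise the set $\lambda n+ym+a$ with $y\in[0,\mu+1]$ absorbs the exceptional element arising when $\mu=n-1$, via $y=n$ collapsing to canonical $(\lambda+m,0)$. I would verify that the five listed sets partition the canonical generators, and cross-check that their total cardinality equals $\nu(T)=a+2$ from Proposition~\ref{basic}(7). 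The $\lambda=0$ case proceeds by the same method with visibly simpler case analysis.
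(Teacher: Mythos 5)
Your proposal is correct and follows essentially the same route as the paper: both compute $\Ap(S,n)$ and $\Ap(S,a)$ explicitly and feed them into Corollary~\ref{Generators}. The only difference is tactical --- the paper closes by exhibiting the $n-2$ candidates that must fail (using $\nu(T)=a+2$ to avoid checking that the remaining candidates survive), whereas you verify the membership condition of the corollary for every candidate, treat the $\mu=n-1$ boundary case explicitly, and use the count $\nu(T)=a+2$ only as a cross-check, which is somewhat more thorough.
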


\begin{proof}
It is not difficult to prove that
\begin{equation*}
\begin{split}
\Ap(S,a)&=\{xn+ym \mid 0 \leq x \leq \lambda-1 {\rm \ and \ } 0 \leq y \leq n+\mu-1 \} \ \cup \\
 & \, \cup \{xn+ym \mid \lambda \leq x \leq m-1 {\rm \ and \ } 0 \leq y \leq \mu-1 \},
\end{split}
\end{equation*}
in particular $\Ap(S,n)=\{0,m,2m, \dots, (n-1)m\}$, as it is well known.  

Since $\nu(T)=\nu(S)+a$, in light of Corollary \ref{Generators} we only need to find $n-2$ integers in $\Ap(S,n)$ or $\Ap(S,a)$ that do not give rise to minimal generators in the sense of Corollary \ref{Generators}. 
 
If $\lambda=0$, these elements are $(\mu+2)m$, $(\mu+3)m, \dots$, $(n-1)m \in \Ap(S,n)$ and $(m-1)n$, $(m-1)n+m, \dots$, $(m-1)n+(\mu-1)m \in \Ap(S,a)$. If $\lambda>0$, they are $(\lambda-1)n+(\mu+2)m$, $(\lambda-1)n+(\mu+3)m, \dots$, $(\lambda-1)n+(n+\mu-1)m \in \Ap(S,a)$. The conclusion is a consequence of Corollary \ref{Generators}.
\end{proof}

\begin{example} \rm If $S= \langle 3,5\rangle$ and $a=10$, we get $T=S+a  =\{0, 13,15,16,18, \rightarrow\}$. As a matter of fact with the notation of Proposition \ref{2generators}, we have $n=3, m=5, \lambda =0, \mu=2$ and, according to Proposition \ref{2generators},   the minimal generators of $T$ are given by the union of the following sets:
$\{n+2a\}=\{23\}$,  $\{ym+a \mid 1 \leq y \leq 3 \}= \{15,20,25\}$ and  $\{xn+ym+a \mid 1 \leq x \leq 4 {\rm \ and \ } 0\leq y \leq  1 \}=\{13, 18, 16, 21, 19, 24, 22, 27 \}$.
\end{example}

The Wilf's conjecture (cf. \cite{W}) says that $F+1 \leq n \cdot \nu$. If we compare $S$ and $S+a$ we have:

\begin{prop} If the Wilf's conjecture holds for $S$, it holds for $S+a$ for all $a \in M-2M$.

\end{prop}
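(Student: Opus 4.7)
The plan is to simply substitute the invariants of $T=S+a$ in terms of those of $S$ using Proposition \ref{basic}, and reduce the Wilf inequality for $T$ to that for $S$ by an elementary manipulation.

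Concretely, the statement to prove is $F(T)+1 \leq n(T) \cdot \nu(T)$. Using parts (1), (4), and (7) of Proposition \ref{basic}, this translates to
\[
F(S) + a + 1 \leq n(S)\bigl(\nu(S)+a\bigr) = n(S)\nu(S) + n(S)\cdot a.
\]
So the only work is to check that the right-hand side dominates the left-hand side, given that Wilf's conjecture holds for $S$, i.e.\ $F(S)+1 \leq n(S)\nu(S)$. Adding $a$ to both sides of the Wilf inequality for $S$ yields $F(S) + a + 1 \leq n(S)\nu(S) + a$, and since $0 \in S \cap [0,F(S)]$ we have $n(S) \geq 1$, hence $a \leq n(S)\cdot a$. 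Chaining the two inequalities gives the desired bound.

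There is essentially no obstacle: the proof is a one-line computation once the formulas of Proposition \ref{basic} are in hand. The only mild point to mention is the trivial lower bound $n(S) \geq 1$, which ensures that the extra summand $a$ appearing from $F(T) = F(S)+a$ is absorbed by the summand $n(S)\cdot a$ appearing from $\nu(T)=\nu(S)+a$.
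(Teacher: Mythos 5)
Your proof is correct and is essentially identical to the paper's: both reduce the Wilf inequality for $T=S+a$ to that for $S$ via Proposition \ref{basic} and absorb the extra $a$ using $a \leq n(S)\cdot a$. Nothing further is needed.
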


 \begin{proof} Let $T=S+a$ and suppose that $F(S)+1 \leq n(S) \cdot \nu(S)$. Applying Proposition \ref{basic}, we get $F(T)+1= F(S)+a+1 \leq n(S) \cdot \nu(S) +a \leq n(S) \cdot \nu(S) +n(S)a=n(S) \cdot (\nu(S)+a)= n(T) \cdot \nu(T)$.
\end{proof}

In \cite{FH} and \cite{S} it is proved that the Wilf's conjecture holds provided that $g(S) \leq 60$ or $e(S) \leq 8$ respectively. Clearly if $S$ satisfies one of this properties and $a$ is large enough, $S+a$ does not satisfy it. 

\begin{cor} If $a \in M-2M$ and either $g(S) \leq 60$ or $e(S) \leq 8$, then the Wilf's conjecture holds for $S+a$. 
\end{cor}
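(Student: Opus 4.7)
The statement is essentially a direct combination of the preceding proposition with the two results cited from \cite{FH} and \cite{S}, so my plan is simply to chain these together.

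First I would invoke the hypotheses on $S$: if $g(S)\leq 60$, then by \cite{FH} the Wilf's conjecture holds for $S$ itself; if instead $e(S)\leq 8$, then by \cite{S} the conjecture again holds for $S$. In both cases we obtain the inequality $F(S)+1 \leq n(S)\cdot \nu(S)$, which is the starting point required by the previous proposition.

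Next I would apply the proposition immediately preceding this corollary, which transfers the Wilf inequality from $S$ to the dilatation $S+a$ for any $a\in M-2M$. That proposition is proved by writing $F(T)+1 = F(S)+a+1$ and using the identities $\nu(T)=\nu(S)+a$ and $n(T)=n(S)$ from Proposition \ref{basic}, together with the bound $a\leq n(S)\cdot a$ (valid because $n(S)\geq 1$). Combining these steps yields $F(T)+1\leq n(T)\cdot \nu(T)$, which is precisely Wilf's conjecture for $T=S+a$.

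The observation worth emphasising in the write-up, and essentially the only content of the corollary beyond a citation, is the remark already made in the paragraph before the statement: although the bounds $g(S)\leq 60$ and $e(S)\leq 8$ are \emph{not} preserved under dilatation (indeed $g(S+a)=g(S)+a$ and $e(S+a)=e(S)+a$ both grow with $a$), the validity of Wilf's conjecture \emph{is} preserved, so the corollary produces infinitely many new semigroups outside the range of \cite{FH} and \cite{S} for which the conjecture can be confirmed. There is no genuine obstacle here; the statement is a one-line deduction and the proof should read as such.
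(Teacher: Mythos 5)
Your proposal is correct and is exactly the argument the paper intends: the corollary is stated without proof precisely because it is the immediate chaining of the results of \cite{FH} and \cite{S} (which give Wilf's conjecture for $S$ under either hypothesis) with the preceding proposition (which transfers the conjecture from $S$ to $S+a$). Your closing remark about the genus and multiplicity bounds not being preserved under dilatation matches the paper's own motivating comment before the statement.
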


It would be interesting to relate other properties of $k[[S+a]]$ to the ones of $k[[S]]$. If $S= \langle n_1, \dots, n_{\nu} \rangle$, there is a surjective a map $k[[x_1, \dots, x_\nu]] \mapsto k[[S]]$ defined by $x_i \mapsto t^{n_i}$ and, therefore, $k[[S]] \cong k[[x_1, \dots, x_n]]/I_S$ for some ideal $I_S$. Clearly the ideal $I_{S+a}$ reflects the properties of $k[[S+a]]$ and, thus, it is important to understand its properties. As a first step we raise the following question based on some computations:

\begin{ques} Let $\mu(I_S)$ be the number of the minimal generators of $I_S$. Is it true that
$
\mu(I_{S+a})=\mu(I_S)+a\nu(S)+\binom{a}{2}?
$
\end{ques}

It is easy to see that the previous question has an affermative answer if $S$ has maximal embedding dimension $e$; in fact in this case it is known that $\mu(S)=e(e-1)/2$ and $\mu(S+a)=(e+a)(e+a-1)/2$, see \cite[Theorem 8.30]{RG}.


\section{Generalizations of the symmetric property}

Given a numerical semigroup $S$, the canonical ideal of $S$ is defined as the set $\Omega_S=\{x \in \mathbb{N} ; F(S)-x \notin S\}$ and $S$ is said to be symmetric if $S=\Omega_S$. This is a central notion in numerical semigroup theory and corresponds to the Gorensteinness of the numerical semigroup ring associated with $S$. It is well-known that $S$ is symmetric if and only if has type one and thus, if $a$ is positive, $S+a$ is never symmetric by Proposition \ref{basic}(5).
On the other hand, it is possible to use this construction to find numerical semigroups that are, in some sense, {\it near} to be symmetric. In particular, in this section we consider the following properties: almost symmetric, $2$-almost Gorenstein and  nearly Gorenstein. 
 
By definition a numerical semigroup $S$ with maximal ideal $M$ and canonical ideal $\Omega$ is almost symmetric if   $\Omega + M \subseteq M$ or, equivalently, if $ \Omega \subseteq M-M$. 
Since $0 \in \Omega$, we have that $\Omega \subseteq 2\Omega \subseteq 3\Omega \subseteq \dots$. The reduction number of $\Omega$ is the smallest $h$ such that $h\Omega=(h+1)\Omega$, i.e. it is the smallest $h$ such that $h\Omega$ is a semigroup.
A semigroup $S$ is almost symmetric but not symmetric if and only if the reduction number of $\Omega$ is 2 and $|2\Omega \setminus \Omega|=1$ (cf. \cite[Proposition 14]{BF}).

Now we show that the almost symmetric property is stable with respect to dilatations.

\begin{lem}Let $a \in M-2M$ and $T=S+a$. Then:
\begin{enumerate}
\item $\Omega_T=(\Omega_S \cup \{F(S)\}) \setminus \{F(T)\}$;
\item $\Omega_S=(\Omega_T \cup \{F(T)\} )\setminus \{F(S)\}$.
\end{enumerate}
\end{lem}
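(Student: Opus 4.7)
The plan is to prove (1) directly from the definitions $\Omega_S = \{x \in \mathbb{N} : F(S)-x \notin S\}$ and $\Omega_T = \{x \in \mathbb{N} : F(T)-x \notin T\}$, and then to deduce (2) by a set-theoretic rearrangement. Throughout I would use $F(T) = F(S)+a$ from Proposition~\ref{basic}(1) together with the explicit description $T = \{0\} \cup (M+a)$.

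First I would dispose of the two boundary points. On the left side of (1): $F(T) \notin \Omega_T$ trivially (since $0 \in T$), while $F(S) \in \Omega_T$ because $F(T)-F(S) = a$ and $a \notin T$ (as $a \neq 0$ and $a = s+a$ with $s \in M$ would force $0 \in M$). On the right side: $F(S) \notin \Omega_S$ since $0 \in S$, while $F(T) \in \Omega_S$ since $F(S)-F(T) = -a < 0 \notin S$. Thus both sides of (1) differ from $\Omega_S$ by the same modification---delete $F(T)$, adjoin $F(S)$---and it remains to match behaviour on all other values of $x$.

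Next, for each $x \in \mathbb{N}$ with $x \notin \{F(S),F(T)\}$ I would show $x \in \Omega_S \iff x \in \Omega_T$. Setting $y = F(S)-x$, so that $F(T)-x = y+a$, there are two cases. If $y \ge 1$, then $y+a > 0$, and since $T \setminus \{0\} = M+a$ we get $y+a \in T \iff y \in M \iff y \in S$, giving the equivalence. If $y \le -1$ (i.e.\ $x > F(S)$), then $x \in \Omega_S$ automatically, and I must check $y+a \notin T$: either $y+a < 0$ (the value $0$ being ruled out by $x \neq F(T)$), or $1 \le y+a \le a-1$, in which case $y+a \notin T$ because the least positive element of $T$ is $e(S)+a > a$. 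So $x \in \Omega_T$ as well, and (1) follows.

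Finally, (2) follows from (1) by adjoining $\{F(T)\}$ and removing $\{F(S)\}$ on both sides, using $F(T) \in \Omega_S$ and $F(S) \notin \Omega_S$ as already recorded. The only real subtlety is the sub-case $F(S) < x < F(T)$ above, where one must know that $T$ contains no element of the interval $(0,a)$; once this structural fact about dilatations is used, the rest of the argument is a direct case split.
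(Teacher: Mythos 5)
Your proof is correct and takes essentially the same approach as the paper's: establish $x \in \Omega_S \Leftrightarrow x \in \Omega_T$ for all $x \neq F(S), F(T)$ and treat the two boundary points separately. Your case split on the sign of $y = F(S)-x$ merely spells out the step the paper compresses into the single equivalence $F(S)-x \notin S \Leftrightarrow F(T)-x \notin T$.
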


\begin{proof}   Suppose that $x \in \mathbb Z$,  $x \neq F(S), F(T)$. We have that $F(S)-x=F(T)-a-x \notin S$ if and only if $(F(T)-a-x)+a=F(T)-x \notin T$; then  $x\in \Omega_S$ if and only if $x\in \Omega_T$. 
Moreover, since $F(T)-F(S)=a \notin T$, we get that $F(S) \in \Omega_T$ and, obviously, $F(T) \in S \subseteq \Omega_S$; hence, the thesis follows. 
\end{proof}

\begin{prop}\label {almostsim} Let $a \in M-2M$ and $T=S+a$. Then, $S$ is almost symmetric if and only if $T$ is almost symmetric.
\end{prop}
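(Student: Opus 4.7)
The plan is to exploit the equivalent characterization that a numerical semigroup is almost symmetric if and only if its canonical ideal is contained in $M-M$, combined with the identity $M_T-M_T=M_S-M_S$ already recorded in the proof of Proposition \ref{basic}(5). With this in hand, the equivalence reduces to the assertion that $\Omega_S\subseteq M_S-M_S$ if and only if $\Omega_T\subseteq M_S-M_S$, i.e.\ both containments can be tested inside the \emph{same} ambient set.

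The crucial observation is that every integer $c\geq F(S)$ automatically lies in $M_S-M_S$: for each $m\in M_S$ one has $c+m>F(S)$ and $c+m>0$, so $c+m\in M_S$. Applied to $c=F(S)$ and, using Proposition \ref{basic}(1), to $c=F(T)=F(S)+a$, this gives that both $F(S)$ and $F(T)$ belong to $M_S-M_S$.

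To conclude, I would invoke the preceding lemma, which describes $\Omega_T=(\Omega_S\cup\{F(S)\})\setminus\{F(T)\}$ and, symmetrically, $\Omega_S=(\Omega_T\cup\{F(T)\})\setminus\{F(S)\}$. Thus $\Omega_S$ and $\Omega_T$ differ only by the swap $F(S)\leftrightarrow F(T)$, and since both of these already lie in $M_S-M_S$, adjoining or removing them cannot change whether the canonical ideal is contained in $M_S-M_S$. This yields $\Omega_S\subseteq M_S-M_S \iff \Omega_T\subseteq M_T-M_T$, as required.

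I do not foresee a genuine obstacle: the whole argument is bookkeeping on top of the preceding lemma and the identification $M_T-M_T=M_S-M_S$. The only point needing explicit verification is the membership $F(S),F(T)\in M_S-M_S$, and that is immediate from the remark that any integer exceeding the Frobenius number translates the maximal ideal into itself.
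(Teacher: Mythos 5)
Your argument is correct and is essentially the paper's own proof: the paper likewise reduces to $\Omega\subseteq M-M$, notes $(M+a)-(M+a)=M-M$, observes that $F(S)$ and $F(T)$ always lie in $M-M$, and concludes via the lemma describing $\Omega_T$ in terms of $\Omega_S$. You have simply spelled out the (easy) verification that integers $\geq F(S)$ lie in $M-M$, which the paper leaves implicit.
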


\begin{proof} $S$ is almost symmetric if and only if $\Omega_S \subset M-M$ and $T$ is almost symmetric if and only if $\Omega_T \subset (M+a)-(M+a)=M-M$. Since $F(S)$ and $F(T)$ are always in $M-M$, we end by the previous lemma.
\end{proof}

\begin{example} \rm $S=\langle 11,14,18,20,21,23,24,27,30 \rangle$ is almost symmetric with type 8 and embedding dimension 9. If $a=5$, we get $$T= S+a=\langle 16,19,23,25,26,27,28,29,30,33,34,36,37,40\rangle$$ which is also almost symmetric (of type $8+5=13$ and embedding dimension 14).
\end{example}

By Proposition \ref{almostsim} we get that, if $S$ is symmetric, every semigroup obtained by a dilatation of $S$ is almost symmetric. However, there are almost symmetric semigroups which are not dilatations of symmetric semigroups. For instance, the semigroups of the example above are not dilatations of symmetric ones or consider, more simply,  $T= \langle 4,7,9 \rangle=\{0,4,7,8,9,11 \rightarrow\}$, which is almost symmetric and is not the dilatation of any semigroup.

Indeed it is natural to define a ``contraction'' to pass from a semigroup $T$ of maximal ideal $M_T$ to a semigroup $S=T-a$, where $a \in \N$. In this case, in order to have $S$ additively closed,  it is necessary and sufficient that $t_1 + t_2 -a \in M_T$, for each $t_1,t_2 \in M_T$, i.e. that $2M_T \subseteq M_T+a$, in particular $a \leq e(T)$.

\bigskip
Let $R$ be a one-dimensional Cohen-Macaulay local ring and let $I$ be a canonical ideal of $R$. Let $e_i(I)$ denote the Hilbert coefficients of $R$ with respect to $I$. It is known that $s=e_1(I)-e_0(I)+\ell_R(R/I)$ is positive and independent of the choice of $I$; moreover, $R$ is almost Gorenstein if and only if $s=1$. 
In \cite{CGKM} Chau, Goto, Kumashiro, and Matsuoka study the rings for which $s=2$ that they call 2-almost Gorenstein local rings or briefly 2-AGL rings. If $\omega$ is a canonical module of $R$ such that $R \subseteq \omega \subseteq \overline{R}$, where $\overline R$ denotes the integral closure of $R$, they prove that $R$ is 2-AGL if and only if $\omega^2=\omega^3$ and $\ell_R (\omega^2/\omega)=2$, see \cite[Theorem 1.4]{CGKM}. 

Similarly, given a numerical semigroup $S$ with canonical ideal $\Omega$, we say that $S$ is 2-AGL if the reduction number of $\Omega$ is 2   and $|2\Omega  \setminus \Omega |=2$. Clearly, $S$ is 2-AGL if and only if $k[[S]]$ is 2-AGL.

From what we recalled above, it is clear that almost symmetric semigroups and 2-AGL semigroups are disjoint classes.

We are interested to the property 2-AGL of $S+a$ and, therefore, we can exclude that $S$ is symmetric, since in this case $S+a$ is almost symmetric and then not 2-AGL.

\begin{lem} Let $S$ be not symmetric and let $a \in M-2M$. Then, for every integer $i \geq 2$, it holds that    $i \Omega_S=i \Omega_{S+a}$. In particular, the canonical ideals of $S$ and $S+a$ have the same reduction number.
\end{lem}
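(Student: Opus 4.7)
The plan is to leverage the preceding lemma, which says that $\Omega_S$ and $\Omega_T$ (where $T = S + a$) agree up to a single swap: $\Omega_T = (\Omega_S \setminus \{F(T)\}) \cup \{F(S)\}$. So the two $i$-fold sumsets $i\Omega_S$ and $i\Omega_T$ can only disagree on sums that actually involve an exchanged element, and for $i \geq 2$ it suffices to show that every such sum can be rewritten in the opposite set. Four small facts will carry the argument: (i) $F(T) \in \Omega_S$, immediate from $F(T) > F(S)$; (ii) $F(S) \in \Omega_T$, direct from the previous lemma; (iii) $F(S) \in 2\Omega_S$; (iv) $F(T) \in 2\Omega_T$.

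For (iii), I would use the non-symmetry of $S$: pick any $x \in \Omega_S \setminus S$. Then $0 < x < F(S)$, and $F(S) - x$ is a positive gap of $S$ (by $x \in \Omega_S$), hence also lies in $\Omega_S$, giving $F(S) = x + (F(S)-x) \in 2\Omega_S$. For (iv), from $F(T) - a = F(S) \notin T$ one gets $a \in \Omega_T$, which combined with (ii) puts $F(T) = F(S) + a$ into $2\Omega_T$.

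Next I would prove $i\Omega_T \subseteq i\Omega_S$: take $s = x_1 + \cdots + x_i$ with $x_j \in \Omega_T$. If no $x_j$ equals $F(S)$, all $x_j$ already lie in $\Omega_S$ and we are done. Otherwise WLOG $x_1 = F(S)$; if every other $x_j$ vanishes then $s = F(S) \in 2\Omega_S$ and padding with $0 \in \Omega_S$ gives $s \in i\Omega_S$, while if $x_2 + \cdots + x_i \geq 1$ then $s > F(S)$ forces $s \in \Omega_S$ automatically and again padding with zeros yields $s \in i\Omega_S$. The reverse inclusion is entirely symmetric, with (iv) replacing (iii). For the ``in particular'' clause, observe that $T$ is not symmetric either, since Proposition \ref{basic}(5) gives $t(T) = t(S) + a \geq 2$; hence both $\Omega_S$ and $\Omega_T$ have reduction number at least $2$, and the identity $i\Omega_S = i\Omega_T$ for all $i \geq 2$ then forces the two reduction numbers to coincide.

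The main obstacle is fact (iii): writing $F(S)$ itself as a sum of two elements of $\Omega_S$. This is the one place the non-symmetry hypothesis is actually used, namely to produce the complementary pair of gaps $\{x, F(S)-x\}$ that lie in $\Omega_S$. Everything else is bookkeeping around the single swap $F(S) \leftrightarrow F(T)$ together with repeated padding by $0 \in \Omega$.
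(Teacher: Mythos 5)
Your proof is correct and follows essentially the same route as the paper's: both reduce everything to the single swap $F(S)\leftrightarrow F(T)$ between the two canonical ideals and dispose of any sum involving a swapped element via the dichotomy $x>F$ (so $x\in S\subseteq\Omega$, then pad with zeros) versus $x=F$ (where non-symmetry gives $F\in 2\Omega$). Your explicit verification that $F(S)\in 2\Omega_S$ by pairing a gap $x\in\Omega_S\setminus S$ with $F(S)-x$ is a detail the paper merely asserts, but the argument is the same.
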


\begin{proof}
Let $x=x_1 + \dots + x_i \in i\Omega_{S+a}$ with $x_1, \dots, x_i \in \Omega_{S+a}=(\Omega_S \cup \{F(S)\}) \setminus \{F(S+a)\}$. Assume that $x_j\notin \Omega_S$ for some $j$, i.e. $x_j=F(S)$; then, $x$ is equal to or grater than $F(S)$. In the latter case $x \in S \subseteq \Omega_S \subseteq i\Omega_S$, while if $x=F(S)$ it follows that $x \in 2\Omega_S \subseteq i\Omega_S$, since $S$ is not symmetric. The other inclusion is analogous.
\end{proof}

Since $S$ and $S+a$ are not symmetric, $F(S) \in 2\Omega_S$ and $F(S+a) \in 2\Omega_{S+a}$; moreover, $F(S+a) \in \Omega_S$ and $F(S) \in \Omega_{S+a}$. Therefore, the previous lemma immediately implies the following:

\begin{cor}
Let $a \in M-2M$. Then, $S+a$ is 2-AGL if and only if $S$ is 2-AGL.
\end{cor}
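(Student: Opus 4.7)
The plan is to combine the preceding lemma with the two flagged memberships of Frobenius numbers. First, as the authors point out, the symmetric case is trivial: if $S$ is symmetric, Proposition \ref{almostsim} makes $S+a$ almost symmetric, hence not 2-AGL, and $S$ itself is not 2-AGL. So I may assume $S$ is not symmetric, which puts the preceding lemma in force: $i\Omega_S=i\Omega_{S+a}$ for all $i\geq 2$.

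The 2-AGL property requires two conditions: the reduction number of $\Omega$ equals $2$, i.e.\ $2\Omega=3\Omega$, and $|2\Omega\setminus\Omega|=2$. The first transfers immediately between $S$ and $S+a$: since $2\Omega_S=2\Omega_{S+a}$ and $3\Omega_S=3\Omega_{S+a}$, the reduction number coincides. So everything boils down to showing
\[
|2\Omega_S\setminus\Omega_S|\;=\;|2\Omega_{S+a}\setminus\Omega_{S+a}|.
\]

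Set $X=2\Omega_S=2\Omega_{S+a}$. By the earlier formula $\Omega_{S+a}=(\Omega_S\cup\{F(S)\})\setminus\{F(S+a)\}$, the symmetric difference $\Omega_S\,\triangle\,\Omega_{S+a}$ is exactly $\{F(S),F(S+a)\}$, so
\[
X\setminus\Omega_{S+a}\;=\;\bigl(X\setminus\Omega_S\bigr)\cup\{F(S+a)\}\setminus\{F(S)\}.
\]
The paragraph immediately preceding the corollary records the four memberships that make this swap legal: $F(S)\in X$ and $F(S)\notin\Omega_S$ (so $F(S)$ really is in $X\setminus\Omega_S$), and $F(S+a)\in X$ and $F(S+a)\notin\Omega_{S+a}$ (so $F(S+a)$ really lands in $X\setminus\Omega_{S+a}$). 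Thus $X\setminus\Omega_S$ and $X\setminus\Omega_{S+a}$ have the same cardinality, and the equivalence follows.

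The only substantive step is the preceding lemma giving $i\Omega_S=i\Omega_{S+a}$ for $i\geq 2$; once that is in hand, the rest is the bookkeeping of a single swap, with no further obstacle. I do not expect any difficulty beyond checking that one does not accidentally need $F(S)\neq F(S+a)$ in a problematic way, but this is automatic for $a>0$ (and $a=0$ is the trivial case $T=S$).
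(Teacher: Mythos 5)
Your proof is correct and is essentially the paper's own argument: dispose of the symmetric case via Proposition \ref{almostsim}, invoke the lemma $i\Omega_S=i\Omega_{S+a}$ for $i\ge 2$ to match reduction numbers, and then count $|2\Omega\setminus\Omega|$ by the single swap of $F(S)$ for $F(S+a)$. The only membership you leave implicit is $F(S+a)\in\Omega_S$ (needed so that adjoining $F(S+a)$ genuinely enlarges $X\setminus\Omega_S$ by one), but this is immediate since $F(S+a)>F(S)$, and the paper records it explicitly in the sentence preceding the corollary.
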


Another generalization of Gorenstein ring is the notion of nearly Gorenstein ring, introduced by Herzog, Hibi, and Stamate in \cite{HHS}. In the one-dimensional case nearly Gorenstein rings generalize almost Gorenstein rings, see \cite[Proposition 6.1]{HHS}. In particular, in \cite{HHS} it is also introduced the notion of nearly Gorenstein numerical semigroups that is a generalization of almost symmetric semigroups.

The trace ideal of $S$ is defined as ${\rm tr}(S)=\Omega_S + (S-\Omega_S)$. Then $S$ is said to be nearly Gorenstein if $M \subseteq {\rm tr}(S)$. The semigroup $S$ is symmetric if and only if ${\rm tr}(S)=S$, otherwise $S$ is nearly Gorenstein exactly when ${\rm tr}(S)=M$, since $tr(S)$ is an ideal contained in $S$.

We give here a simple direct proof that nearly Gorenstein semigroups generalize the almost symmetric ones.

\begin{prop} \label {easy}Each almost symmetric semigroup is nearly Gorenstein.
\end{prop}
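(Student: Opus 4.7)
The plan is to chase the definitions directly: I will show that the almost symmetric hypothesis gives an inclusion $M\subseteq S-\Omega_S$ for free, and then combine this with the fact that $0\in\Omega_S$ to place $M$ inside the trace ideal.

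First I would recall that $\Omega_S=\{x\in\mathbb{N}\mid F(S)-x\notin S\}$ contains $0$, since $F(S)\notin S$. Then I would use the equivalent form of the almost symmetric condition recalled just before Proposition \ref{almostsim}, namely $\Omega_S\subseteq M-M$, which reads $\Omega_S+M\subseteq M\subseteq S$. By the very definition of the colon $S-\Omega_S=\{x\in\mathbb{Z}\mid x+\Omega_S\subseteq S\}$, this inclusion says exactly that every $m\in M$ lies in $S-\Omega_S$, so $M\subseteq S-\Omega_S$.

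Second, since $0\in\Omega_S$, I would write
\[
M \;=\; 0+M \;\subseteq\; \Omega_S+(S-\Omega_S) \;=\; \mathrm{tr}(S),
\]
which is precisely the nearly Gorenstein condition $M\subseteq \mathrm{tr}(S)$.

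There is no real obstacle: the only subtlety is being careful that the two characterizations of almost symmetric ($\Omega_S+M\subseteq M$ and $\Omega_S\subseteq M-M$) coincide, and that $0$ actually belongs to $\Omega_S$, both of which are immediate from the definitions. One may also note that the symmetric case is automatically included, since then $\mathrm{tr}(S)=S\supseteq M$.
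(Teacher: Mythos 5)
Your proof is correct and follows essentially the same route as the paper's: both arguments rest on the observation that $\Omega_S\subseteq M-M$ forces $M\subseteq S-\Omega_S$, and then use $0\in\Omega_S$ to conclude $M\subseteq\mathrm{tr}(S)$. The only cosmetic difference is that you avoid the paper's case split (symmetric versus non-symmetric almost symmetric) by working with the inclusion $M\subseteq S-\Omega_S$ rather than the equality $S-\Omega_S=M$, which is a harmless streamlining.
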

\begin{proof}  If $S$ is symmetric, ${\rm tr}(S)=S$ and, then, it is nearly Gorenstein. If $S$ is a non-symmetric almost symmetric semigroup, we have $S-\Omega_S = M$, since $\Omega_S \subseteq M-M$. It follows that ${\rm tr}(S)=\Omega_S+ (S-\Omega_S)= \Omega_S+M=M$. 
\end{proof}

\begin{rem} \rm
Nearly Gorenstein and 2-AGL numerical semigroups are two disjoint classes. This is already noted in \cite{CGKM} for one-dimensional rings, but we include a simple proof in the case of numerical semigroups.

Assume by contradiction that $S$ is both 2-AGL and nearly Gorenstein and let $x\in M$ such that $F(S)-x \in 2\Omega\setminus \Omega$. Clearly $S$ is not symmetric and  $2\Omega \setminus \Omega=\{F(S)-x, F(S)\}$. Since $M=\Omega+(S-\Omega)$, we have $x=(F(S)-f)+s$ for some $f \notin S$ and some $s \in S-\Omega \subseteq M$; therefore, $f-s=F(S)-x \in 2\Omega\setminus \Omega$.
Consequently $F(S)-s=(F(S)-f)+f-s \in 3\Omega=2\Omega$ and it is not in $\Omega$, since $s\in M$; thus, $s=x$.
Moreover, if $F(S)-s=k_1+k_2$ with $k_1,k_2 \in \Omega$, it follows that $F(S)-(s+k_1)=k_2 \in \Omega$, but $s+k_1 \in (S-\Omega)+\Omega \subseteq S$ yields a contradiction.
\end{rem}

\begin{lem} Let $a \in M-2M$. If $S$ is not symmetric, then
${\rm tr}(S+a)={\rm tr}(S)+a$.
\end{lem}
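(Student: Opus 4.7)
My plan is to relate the two sides via the common ``middle'' $\Omega_T + (S - \Omega_S) + a$, reducing the lemma to two clean sub-identities. Writing $J := S - \Omega_S$ for brevity, I would establish
\begin{enumerate}
\item $T - \Omega_T = J + a$, and
\item $\Omega_T + J = \Omega_S + J$.
\end{enumerate}
Granted these, ${\rm tr}(T) = \Omega_T + (T - \Omega_T) = \Omega_T + J + a = \Omega_S + J + a = {\rm tr}(S) + a$. The structural facts I would rely on throughout are the description $\Omega_T = (\Omega_S \setminus \{F(T)\}) \cup \{F(S)\}$ from the earlier lemma (together with the observations $F(S) \notin \Omega_S$, since $F(S)-F(S)=0 \in S$, and $F(T) \in \Omega_S$, since $F(S) - F(T) = -a \notin S$), and the elementary fact that the hypothesis ``$S$ is not symmetric'' forces $0 \notin J$, so every $j \in J$ lies in $M$.

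For (1), the inclusion $J + a \subseteq T - \Omega_T$ reduces to checking $(j + a) + \omega \in T$ for each $j \in J$ and $\omega \in \Omega_T$. When $\omega \in \Omega_S$, $j + \omega \in S$ and $j + \omega > 0$, so $j + \omega \in M$, whence $(j + a) + \omega \in M + a \subseteq T$; when $\omega = F(S)$, the sum $j + F(S)$ exceeds $F(S)$ and so lies in $M$, giving the same conclusion. For the reverse inclusion, if $z \in T - \Omega_T$ then $z \in T$, and the fact that $F(S) \in \Omega_T \setminus T$ (because $T \subseteq S$ and $F(S)$ is a gap of $S$) rules out $z = 0$. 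Hence $z \in M + a$ and $z - a \in M$, after which case analysis on $\omega \in \Omega_S$, treating $\omega = F(T)$ via the identity $(z - a) + F(T) = z + F(S) > F(S)$, shows $z - a \in J$.

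For (2), since $\Omega_S$ and $\Omega_T$ agree outside of $\{F(S), F(T)\}$, it suffices to verify $F(T) + J \subseteq \Omega_T + J$ and $F(S) + J \subseteq \Omega_S + J$. The first is immediate from the identity $F(T) + j = F(S) + (a + j)$ together with $F(S) \in \Omega_T$ and $a + j \in J$ (proved by the same $M + a \subseteq T$ argument as in (1)). The second admits the cleaner decomposition $F(S) + j = 0 + (F(S) + j)$: indeed $0 \in \Omega_S$, and $F(S) + j \in J$ because every $F(S) + j + \omega$ with $\omega \in \Omega_S$ exceeds $F(S)$ (using $j > 0$) and hence lies in $S$. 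This is precisely the step that fails if $0 \in J$, which is why ``$S$ is not symmetric'' is essential.

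I expect the main obstacle to be bookkeeping rather than any deep difficulty: keeping straight which Frobenius number belongs to which canonical ideal, and not conflating the case $\omega = F(T) \in \Omega_S \setminus \Omega_T$ with the case $\omega = F(S) \in \Omega_T \setminus \Omega_S$. Conceptually, the lemma expresses that the two ``swaps'' effected by the dilatation --- exchanging $F(T)$ for $F(S)$ in the canonical ideal, and shifting $S - \Omega_S$ by $a$ --- cancel out at the level of the trace ideal.
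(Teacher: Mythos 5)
Your proposal is correct: I checked each of the two sub-identities and the deductions go through. Both you and the paper start from the same two ingredients --- the description $\Omega_T=(\Omega_S\cup\{F(S)\})\setminus\{F(T)\}$ and the observation that non-symmetry of $S$ forces $0\notin S-\Omega_S$ --- but the decompositions are genuinely different. The paper passes to the common core $\Omega':=\Omega_S\setminus\{F(T)\}=\Omega_T\setminus\{F(S)\}$ and shows that all four sets involved ($S-\Omega_S$, $T-\Omega_T$, ${\rm tr}(S)$, ${\rm tr}(T)$) are unchanged when the relevant canonical ideal is replaced by $\Omega'$, after which the single translation identity $T-\Omega'=a+(S-\Omega')$ finishes the computation. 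You instead keep the full canonical ideals and prove $T-\Omega_T=(S-\Omega_S)+a$ and $\Omega_T+(S-\Omega_S)=\Omega_S+(S-\Omega_S)$, rewriting $F(T)+j=F(S)+(a+j)$ and $F(S)+j=0+(F(S)+j)$ to absorb the exceptional elements. Your route buys a clean standalone identity (the dual of the canonical ideal dilates by exactly $a$), and your way of excluding $z=0$ from $T-\Omega_T$ (via $F(S)\in\Omega_T\setminus T$) is more direct than the paper's appeal to the type of $T$; the paper's route has the advantage that the exceptional elements $F(S)$ and $F(T)$ are discarded once and for all at the start, so the final chain of equalities involves no case analysis. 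Two small points to tidy up in a full write-up: the case $a=0$ (where $F(S)-F(T)=-a\notin S$ fails) is trivial and should be dispatched separately, and in the reverse inclusion of your identity (1) you should spell out the case $\omega\in\Omega_S\setminus\{F(T)\}$, namely that $z+\omega\in T\setminus\{0\}=M+a$ gives $(z-a)+\omega\in M\subseteq S$.
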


\begin{proof} 
Let $T=S+a$. We first notice that $S-\Omega_S=S-(\Omega_S \setminus \{F(T)\})$, since $F(T)$ is in the conductor of $S$. Similarly, we claim that $T-\Omega_T=T-(\Omega_T \setminus \{F(S)\})$. The type of $T$ is greater than two and, then, there are at least two elements in $\Omega_T \setminus T$; this implies that $0 \notin T-(\Omega_T \setminus \{F(S)\})$. Let $x \in T-(\Omega_T \setminus \{F(S)\})$. Since $0 \in \Omega_T$, it follows that $x \in T \setminus \{0\}$ and, then, $x=m+a$ with $m \in M$. Therefore, $x+F(S) > F(T)$ and $x \in T-\Omega_T$. 

Moreover, ${\rm tr}(T)=(\Omega_T \setminus \{F(S)\})+(T-\Omega_T)$; in fact, if $x=F(S)+y$ with $y \in T-\Omega_T$, then we can write $x=0+(F(S)+y) \in \Omega_T \setminus \{F(S)\}+(T-\Omega_T)$, since $y \neq 0$. A similar argument shows that ${\rm tr}(S)=(\Omega_S \setminus \{F(T)\}) + (S-\Omega_S)$.

Since $S$ is not symmetric, $0 \notin (S-(\Omega_S \setminus \{F(T)\}))$ and, using this fact, it is easy to prove that $T-(\Omega_S \setminus \{F(T)\})=a + (S-(\Omega_S \setminus \{F(T)\}))$.

Hence, it follows that
\begin{gather*}
{\rm tr}(T)= (\Omega_T \setminus \{F(S)\}) + (T-\Omega_T) = (\Omega_T \setminus \{F(S)\}) + (T-(\Omega_T \setminus \{F(S)\}))= \\
=(\Omega_S \setminus \{F(T)\}) + a + (S-(\Omega_T \setminus \{F(S)\}))= a+ (\Omega_S \setminus \{F(T)\}) + (S- \Omega_S)=a+{\rm tr}(S)
\end{gather*}
\end{proof}

Recall that if $S$ is symmetric,     $S+a$ is always almost symmetric (Proposition \ref{almostsim}) and then by Proposition \ref{easy}  it is nearly Gorenstein. Thus, we get the following:

\begin{cor}
$S$ is nearly Gorenstein if and only if $S+a$ is nearly Gorenstein for all $a \in M-2M$.
\end{cor}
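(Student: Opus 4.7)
The plan is to split on whether $S$ is symmetric and to invoke the preceding lemma in the non-symmetric case, since the previous lemma's hypothesis excludes symmetric $S$.

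First I would dispose of the trivial case $S$ symmetric. Then $\operatorname{tr}(S)=S \supseteq M$, so $S$ is nearly Gorenstein. On the other hand, $S$ being symmetric forces $S$ (in particular) to be almost symmetric, so Proposition~\ref{almostsim} yields that $S+a$ is almost symmetric for every $a \in M-2M$, and then Proposition~\ref{easy} gives that $S+a$ is nearly Gorenstein. This handles both implications in this case.

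For the main case, I would assume $S$ is not symmetric. By Proposition~\ref{basic}(5), $t(S+a)=t(S)+a \geq 2$, so $T:=S+a$ is also non-symmetric. Hence, as recalled in the paragraph introducing $\operatorname{tr}$, the nearly Gorenstein property for $S$ is exactly $\operatorname{tr}(S)=M$, and for $T$ it is exactly $\operatorname{tr}(T)=M+a$ (the maximal ideal of $T$). The previous lemma now applies and gives $\operatorname{tr}(T)=\operatorname{tr}(S)+a$. Since translation by $a$ is injective on subsets of $\mathbb{N}$, we have $\operatorname{tr}(S)+a = M+a$ if and only if $\operatorname{tr}(S)=M$, which is the desired equivalence.

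There is no substantial obstacle: the real content was already packed into the preceding lemma computing $\operatorname{tr}(T)$, and the only subtlety is that this lemma required $S$ non-symmetric, which is precisely why the symmetric case must be routed through the almost symmetric implication (Propositions~\ref{almostsim} and~\ref{easy}) rather than through the trace identity.
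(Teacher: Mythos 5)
Your proposal is correct and follows essentially the same route as the paper: the symmetric case is handled via Propositions \ref{almostsim} and \ref{easy}, and the non-symmetric case via the trace identity ${\rm tr}(S+a)={\rm tr}(S)+a$ from the preceding lemma. You simply spell out the details (non-symmetry of $S+a$, injectivity of translation) that the paper leaves implicit.
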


\end{document}